\newcommand{\R}{\mathbb{R}}
\newcommand{\I}{\mathbbm{1}}
\newcommand{\Var}{\mathrm{Var}}
\newcommand{\cov}{\mathrm{cov}}
\newcommand{\Poi}{\mathrm{Poisson}}
\newcommand{\CM}{\mathrm{CM}}
\newcommand{\Address}{{
\bigskip
\footnotesize

\textsc{Department of Mathematics, University of Southern California, Los Angeles, CA, 90089}\par\nopagebreak
\textit{E-mail address}: \texttt{paguyo@usc.edu}
}}
\def\bal#1\eal{\begin{align*}#1\end{align*}}
\newtheorem{theorem}{Theorem}[section]
\newtheorem{lemma}[theorem]{Lemma}
\newtheorem{proposition}[theorem]{Proposition}
\title{Convergence rates of limit theorems in random chord diagrams} 
\author{J. E. Paguyo } 
\date{}
\subjclass[2020]{60C05, 60F05}
\keywords{chord diagrams, matchings, size-bias coupling, Stein's method, central limit theorem, rates of convergence}
\begin{document}


\begin{abstract}
We study the asymptotic distributions of the number of crossings and the number of simple chords in a random chord diagram. 
Using size-bias coupling and Stein's method, we obtain bounds on the Kolmogorov distance between the distribution of the number of crossings and a standard normal random variable, 
and on the total variation distance between the distribution of the number of simple chords and a Poisson random variable. 
As an application, we provide explicit error bounds on the number of chord diagrams containing no simple chords. 
\end{abstract}

\maketitle


\section{Introduction}

A {\em chord diagram} of size $n$ is a matching of $2n$ points on a circle, labeled in clockwise order, with each matching corresponding to a {\em chord}. 
Alternatively, we can represent it as a {\em linearized chord diagram} by placing $2n$ points on a line in increasing order and connecting pairs by arcs. See Figure \ref{Fig1ChordDiagram} for an example. 
There are $(2n-1)!!$ chord diagrams of size $n$. 

A {\em matching}, or {\em fixed-point free involution}, is a permutation $\pi \in S_{2n}$ such that $\pi^2 = 1$ and $\pi(i) \neq i$ for all $i \in \{1,\ldots,2n\}$. 
Thus we may represent a chord diagram, $C_n$, by its corresponding matching, $\pi$, and write $C_n = C_n(\pi) = \pi$. 
In the remainder of the paper, we go back and forth between $C_n$ and $\pi$ to denote a chord diagram whenever one representation is more useful in context than the other. 
For example, the chord diagram in Figure \ref{Fig1ChordDiagram} can be written as a matching in cycle notation as $\pi = (1 \, 8)(2 \, 9)(3 \, 4)(5 \, 7)(6 \, 10)(11 \, 12)$. 

\begin{figure}
\scalebox{0.6}{
\begin{tikzpicture}
	\def \n {20}
        \def \radius {2.5}
        \draw circle(\radius)
              foreach \x in{1,...,12}{
                  (-360/12*\x - 90: -\radius) node[fill, circle, inner sep=1.5pt, minimum size = 0.07cm]{}
                  node[anchor=-360/12*\x - 90] {\small $\x$}
              };
         \draw[<->] (-360/12*1 - 90: -\radius) -- (-360/12*8 - 90: -\radius)
         		(-360/12*2 - 90: -\radius) -- (-360/12*9 - 90: -\radius)
			(-360/12*3 - 90: -\radius) -- (-360/12*4 - 90: -\radius)
			(-360/12*5 - 90: -\radius) -- (-360/12*7 - 90: -\radius)
			(-360/12*6 - 90: -\radius) -- (-360/12*10 - 90: -\radius)
			(-360/12*11 - 90: -\radius) -- (-360/12*12 - 90: -\radius);
\end{tikzpicture}
\begin{tikzpicture}[out = 45, in = 135]
	\foreach \x in {1,...,12}
		\fill (\x,0) circle (0.07cm) node[anchor = north] {\small $\x$};
	\draw (1,0) to (8,0)
		 (2,0) to (9,0)
		 (3,0) to (4,0)
		 (5,0) to (7,0)
		 (6,0) to (10,0)
		 (11,0) to (12,0);
\end{tikzpicture}
}
\caption{A chord diagram and its linearized version.}
\label{Fig1ChordDiagram}
\end{figure}
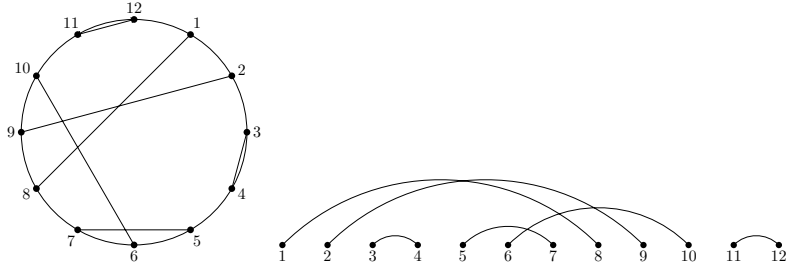

We now define terminology and fix some notation that will be used throughout the paper. For a positive integer $n$, let $[n] = \{1,\ldots,n\}$. We denote $(2n-1)!!$ to be the product of odd integers from $1$ to $2n-1$. 
If there exists positive constants $c$ and $n_0$ such that $a_n \leq cb_n$ for all $n \geq n_0$, then we write $a_n = O(b_n)$.
The chord connecting any two points $x$ and $y$ is denoted by $(x,y)$. 

A {\em connected} chord diagram is a diagram where no set of chords can be separated from the remaining chords by a line. A {\em component} is a maximal connected subdiagram. 
The {\em root component} is the component that contains the point labeled $1$. 
A {\em crossing} is a quadruple $(i,j,k,\ell)$, with $i < j < k < \ell$, such that $(i,k)$ and $(j,\ell)$ are chords. 
On the other hand, a {\em nesting} is a quadruple $(i,j,k,\ell)$, with $i < j < k < \ell$, such that $(i,\ell)$ and $(j,k)$ are chords. 
A {\em simple chord} is a chord that connects two consecutive endpoints; that is, a simple chord is of the form $(i,i+1)$, for $i \in [2n]$, where addition is understood to be modulo $2n$. 
For example, the chord diagram in Figure \ref{Fig1ChordDiagram} has $4$ crossings, $4$ nestings, $3$ components, and $2$ simple chords. 

The enumeration of chord diagrams was initiated by Touchard in \cite{Tou52}, where he found a bivariate generating function for $T_{n,k}$, the number of chord diagrams of size $n$ with exactly $k$ crossings. 
In particular, this gives $T_{n,0} = \frac{1}{n+1}\binom{2n}{n}$, the classical result that the number of nonintersecting chord diagrams of size $n$ is given by the $n$th Catalan number. 
Riordan \cite{Rio75} extended Touchard's results by finding an explicit formula for $T_{n,k}$ in the form of an alternating sum. 
This allows for the computation of $T_{n,k}$ for moderately sized $n$ and $k$, however obtaining asymptotics on $T_{n,k}$ as $n,k \to \infty$ remains difficult.

Probabilistic questions have also been studied. Using recurrence relations, Stein and Everett \cite{SE78} showed that the probability a random chord diagram is connected approaches $1/e$ as $n \to \infty$. 
Flajolet and Noy \cite{FN00} used generating functions and analytic combinatorics to find the asymptotic distributions of the number of components, the size of the largest component, and the number of crossings. 
Moreover they showed that a random chord diagram is {\em monolithic} with high probability, meaning it consists of one large connected component and some isolated chords. 
In \cite{CDDSY07}, Chen et al showed that the crossing numbers and nesting numbers of matchings have a symmetric joint distribution. 
More recently, Acan \cite{Aca17} extended Flajolet and Noy's results about the components of a random chord diagram in several directions, 
and Acan and Pittel \cite{AP17} discovered the emergence of a giant component in the intersection graph of a random chord diagram, under some conditions on the number of crossings. 

Chord diagrams have found applications in a wide variety of fields such as topology, random graph theory, biology, quantum field theory, and free probability \cite{APRW13, BR00, BR04, HZ86, Kon93, LN11, Mah20, NS06}.


\subsection{Main Results}

Let $\mu$ and $\nu$ be probability distributions. The {\em total variation distance} between $\mu$ and $\nu$ is
\bal
d_{TV}(\mu, \nu) := \sup_{A \subseteq \Omega} |\mu(A) - \nu(A)|
\eal
where $\Omega$ is a measurable space. The {\em Kolmogorov distance} between $\mu$ and $\nu$ is 
\bal
d_K(\mu, \nu) := \sup_{x\in \R} |\mu(-\infty, x] - \nu(-\infty, x]|.
\eal
If $X$ and $Y$ are random variables with distributions $\mu$ and $\nu$, respectively, then we write $d_K(X,Y)$ to denote the Kolmogorov distance between the distributions of $X$ and $Y$, and similarly for $d_{TV}$. 

In \cite{FN00}, Flajolet and Noy used generating functions to prove a central limit theorem for the number of crossings in a random chord diagram. Another proof using weighted dependency graphs was given by Feray in \cite{Fer18}. 
The main difficulty with the crossing statistic is that occurrences of crossings in disjoint sets of indices are {\em not} independent events. 

Our main result gives a rate of convergence for the asymptotic normality of the number of crossings by providing an upper bound of order $n^{-1/2}$ on the 
Kolmogorov distance between the standardized number of crossings and a standard normal random variable. 

\begin{theorem} \label{crossingStein}
Let $X_n$ be the number of crossings in a random chord diagram of size $n \geq 2$. Let $W_n = \frac{X_n - \mu_n}{\sigma_n}$, where $\mu_n = EX_n$ and $\sigma_n^2 = \Var(X_n)$. Then 
\bal
d_K(W_n,Z) = O\left( \frac{1}{\sqrt{n}} \right),
\eal
where $Z$ is a standard normal random variable. 
\end{theorem}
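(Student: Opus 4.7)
The plan is to apply Stein's method via a size-bias coupling, in the framework of Goldstein and Rinott. Write $X_n = \sum_\alpha Y_\alpha$, where $\alpha = (i,j,k,\ell)$ ranges over 4-tuples with $i<j<k<\ell$ and $Y_\alpha$ is the indicator of a crossing at $\alpha$. A direct first moment computation gives $\mu_n = \binom{2n}{4}/[(2n-1)(2n-3)] = n(n-1)/6$; for the variance I would either carry out the standard second moment computation over pairs of 4-tuples or cite the asymptotic $\sigma_n^2 = \Theta(n^3)$ from Flajolet--Noy.

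Since the $EY_\alpha$ are all equal, a size-biased $X_n^s$ is obtained by choosing $\alpha$ uniformly at random and then resampling the matching uniformly subject to $\pi(i) = k$ and $\pi(j) = \ell$. To couple, I would start with a uniform matching $\pi$ together with an independent uniform $\alpha$, and successively enforce the two chord constraints by a minimal swap: if $\pi(i) \neq k$, set $a = \pi(i)$ and $b = \pi(k)$, and replace the chord pair $\{(i,a),(k,b)\}$ by $\{(i,k),(a,b)\}$; then do the same for $(j,\ell)$. A direct counting argument (each target matching has exactly $2n-1$ preimages under the first swap) shows this yields the correct conditional distribution. The swap modifies at most four chords, and since each chord participates in at most $n-1$ crossings, $|X_n^s - X_n| \leq A$ almost surely for some $A = O(n)$.

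I would then invoke a Berry--Esseen bound for bounded size-bias couplings of the form
\[
d_K(W_n, Z) \;\leq\; C_1 \frac{\mu_n}{\sigma_n^2}\sqrt{\Var\bigl(E[X_n^s - X_n \mid \mathcal{F}_n]\bigr)} \;+\; C_2\frac{\mu_n A^2}{\sigma_n^3},
\]
where $\mathcal{F}_n$ is generated by $\pi$. Substituting $\mu_n = \Theta(n^2)$, $\sigma_n^2 = \Theta(n^3)$, and $A = O(n)$, the second term is $\Theta(n^{-1/2})$, so the theorem reduces to showing that $\Var(E[X_n^s - X_n \mid \mathcal{F}_n]) = O(n)$.

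The main obstacle is this conditional variance estimate. Since $E[X_n^s - X_n] = \sigma_n^2/\mu_n = \Theta(n)$, one must show the conditional expectation concentrates around its mean at scale $\sqrt n$. My plan is to compute $E[X_n^s - X_n \mid \pi]$ by averaging over the uniform $\alpha$: the difference decomposes, case by case on whether zero, one, or two swaps occur, into a sum over the at most four affected chords of their crossing contributions relative to $\pi$. This realizes the conditional expectation as an explicit low-degree functional of $\pi$ whose variance I would control by the standard covariance-sum technique over pairs of 4-tuples, grouped by the number of shared indices, with the leading contributions canceling out and the residual correlations yielding the required $O(n)$ bound.
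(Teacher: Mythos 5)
Your overall strategy is the same as the paper's: decompose $X_n$ into crossing indicators over quadruples, build a size-bias coupling by forcing a uniformly chosen quadruple to cross and repairing the displaced endpoints, apply a bounded-size-bias Berry--Esseen bound, and reduce the theorem to showing $\Var(E[X_n^s - X_n \mid \pi]) = O(n)$. Up to that reduction the proposal is sound: the coupling is legitimate, $|X_n^s - X_n| = O(n)$, and with $\mu_n = \Theta(n^2)$, $\sigma_n^2 = \Theta(n^3)$ the second Stein term is indeed $\Theta(n^{-1/2})$.

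The gap is in the conditional variance estimate, which is the heart of the proof and which you leave as an unsubstantiated sketch. Writing $E[X_n^s - X_n \mid \pi] = N^{-1}\sum_i (X_n^{(i)} - X_n)$ and expanding the variance as $N^{-2}\sum_{i,j}\cov(X_n^{(i)} - X_n, X_n^{(j)} - X_n)$, the pairs with $i \cap j \neq \emptyset$ are few enough ($O(n^7)$ of them) that the trivial bound $O(n^2)$ on each covariance suffices. But there are $\Theta(n^8)$ pairs of disjoint quadruples and $N^2 = \Theta(n^8)$, so for these you must improve the trivial $O(n^2)$ covariance bound to $O(n)$, and ``the leading contributions canceling out'' does not identify how. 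The mechanism the paper uses is conditional independence, not cancellation: for disjoint $i=(a,b,c,d)$ and $j=(e,f,g,h)$, each difference $X_n^{(i)} - X_n$ depends only on crossings involving the chords created or destroyed by the surgery at $i$, and the two sets of affected chords are disjoint --- hence the two differences are independent --- on the event $A$ that no point of $i$ is matched by $\pi$ into $j$ and vice versa. Since $P(A^c) \leq 16/(2n-1)$ by a union bound, the lemma $|\cov(U,V)| \leq 4C_1C_2\,P(A^c)$ for bounded variables uncorrelated given $A$ yields $|\cov(X_n^{(i)} - X_n, X_n^{(j)} - X_n)| \leq 4(4n)^2\cdot\frac{16}{2n-1} = O(n)$, which is exactly the factor-of-$n$ gain needed. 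If you instead intend to expand each covariance into $\Theta(n^4)$ covariances of indicator products and track the near-factorization of matching probabilities directly, that can in principle be made to work, but it is substantially more delicate and you would need to make the claimed cancellation precise; as written, the crucial step is asserted rather than proved.
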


The proof uses size-bias coupling and Stein's method. A major advantage of using Stein's method in proving limit theorems is that it gives explicit error bounds on the distributional convergence. 

Size-bias coupling and Stein's method have previously been used to prove central limit theorems, for example, by Conger and Viswanath \cite{CV07} for the number of inversions and descents of permutations of multisets, 
Betken \cite{Bet19} for the number of isolated vertices in preferential attachment random graphs, and He \cite{He22} for the number of descents in a Mallows distributed permutation and its inverse. 

We also use size-bias coupling and Stein's method to obtain an upper bound on the total variation distance between the number of simple chords and a Poisson random variable. 

\begin{theorem}\label{simplechordsStein}
Let $S_n$ be the number of simple chords in a random chord diagram of size $n$ and let $Y$ be a $\Poi(1)$ random variable. Then 
\bal
d_{TV}(S_n, Y) = O\left(\frac{1}{n}\right). 
\eal
\end{theorem}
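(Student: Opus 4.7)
The plan is to apply Stein's method for Poisson approximation via a size-bias coupling. Write $S_n = \sum_{i=1}^{2n} X_i$, where $X_i = \I\{(i, i+1) \text{ is a chord}\}$ with indices modulo $2n$. Since each simple chord is present with probability $1/(2n-1)$, we have $\lambda_n := \E S_n = 2n/(2n-1)$; as the indicators all have the same mean, any size-biasing index $I$ may be drawn uniformly on $[2n]$.

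I would build a size-biased version $S_n^s$ via the following switching coupling. Sample a uniform matching $\pi$ and, independently, $I$ uniform on $[2n]$. If $\pi(I) = I+1$, set $\pi' = \pi$; otherwise, letting $a = \pi(I)$ and $b = \pi(I+1)$, replace the chords $(I, a)$ and $(I+1, b)$ by the chords $(I, I+1)$ and $(a, b)$. A preimage count shows that each matching containing $(i, i+1)$ has exactly $2n - 1$ preimages under this map, matching the ratio $(2n-1)!!/(2n-3)!! = 2n-1$ of the two uniform distributions; hence the conditional law of $\pi'$ given $I = i$ is uniform on matchings containing the chord $(i, i+1)$, so $S_n^s := S_n(\pi')$ is a size-bias version of $S_n$.

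With the coupling in hand, the standard size-bias Chen--Stein bound gives
\bal
d_{TV}(S_n, \Poi(\lambda_n)) \leq \min(1, \lambda_n) \cdot \E |S_n^s - S_n - 1|,
\eal
so it suffices to show $\E|S_n^s - S_n - 1| = O(1/n)$. When $X_I(\pi) = 1$, the swap is trivial and $|S_n^s - S_n - 1| = 1$, contributing $P(X_I = 1) = 1/(2n-1)$ to the expectation. When $X_I(\pi) = 0$, only the indicators $X_I, X_{I+1}, X_a, X_b$ can change (since $\pi$ is modified only at positions $I, I+1, a, b$), and direct bookkeeping gives
\bal
S_n^s - S_n - 1 = \I\{(a, b) \text{ is a simple chord}\} - \I\{a = I - 1\} - \I\{b = I + 2\}.
\eal
Each of the three indicators on the right has expectation $O(1/n)$ by elementary counting (for instance, $P(\pi(I) = I - 1) = 1/(2n-1)$, and a sum over at most $2n-3$ valid values of $a$ bounds the simple-chord probability by $2/(2n-1)$). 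Combining gives $\E|S_n^s - S_n - 1| = O(1/n)$, and then $|\lambda_n - 1| = 1/(2n-1)$ together with $d_{TV}(\Poi(\lambda_n), \Poi(1)) \leq |\lambda_n - 1|$ and the triangle inequality yields $d_{TV}(S_n, Y) = O(1/n)$.

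The main technical step is the explicit bookkeeping for $S_n^s - S_n - 1$ in the non-trivial case and the verification that each of the resulting indicator events has probability $O(1/n)$; the preimage argument establishing the size-bias property of the switching map is routine but needs to be made explicit for the coupling to apply.
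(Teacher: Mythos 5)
Your proposal is correct and follows essentially the same route as the paper: the same indicator decomposition, the same switching construction of the size-bias coupling (create $(I,I+1)$ and $(\pi(I),\pi(I+1))$, keep the rest), and the same size-bias Stein bound $d_{TV}(S_n,\Poi(\lambda_n)) \leq \min\{1,\lambda_n\}\,E|S_n^s - S_n - 1|$, with equivalent bookkeeping of which simple chords are created or destroyed. Your explicit use of $d_{TV}(\Poi(\lambda_n),\Poi(1)) \leq |\lambda_n - 1|$ in the final step is in fact slightly more careful than the paper's remark that $\lambda_n \to 1$.
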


As an application of Theorem \ref{simplechordsStein}, we obtain absolute error bounds on the number of chord diagrams of size $n$ which contain no simple chords. 

\begin{theorem}\label{simplechordfreechorddiagrams}
Let $s(n)$ be the number of chord diagrams of size $n$ with no simple chords. Then 
\bal
\frac{(2n-1)!!}{e}\left(e^{-\frac{1}{2n-1}} - \frac{10}{n} \right) \leq s(n) \leq \frac{(2n-1)!!}{e}\left(e^{-\frac{1}{2n-1}} + \frac{10}{n} \right).
\eal 
\end{theorem}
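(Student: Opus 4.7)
The plan is to recognize that $s(n)$ is simply $(2n-1)!!$ times the probability that a uniform random chord diagram has no simple chord, and then feed the event $\{S_n = 0\}$ into the total variation bound already supplied by Theorem \ref{simplechordsStein}. Since the uniform distribution on chord diagrams of size $n$ assigns mass $1/(2n-1)!!$ to each diagram,
\[
\frac{s(n)}{(2n-1)!!} = \Pb(S_n = 0),
\]
so it suffices to approximate $\Pb(S_n = 0)$ with additive error $O(1/n)$.

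For $Y \sim \Poi(1)$ we have $\Pb(Y = 0) = e^{-1}$, so the definition of total variation distance together with Theorem \ref{simplechordsStein} immediately yields
\[
|\Pb(S_n = 0) - e^{-1}| \leq d_{TV}(S_n, Y) \leq \frac{C}{n},
\]
for the explicit constant $C$ produced by the proof of Theorem \ref{simplechordsStein}. A short calculation shows $E[S_n] = \sum_{i=1}^{2n} \Pb(\pi(i)=i+1) = \frac{2n}{2n-1} = 1 + \frac{1}{2n-1}$, which explains why the natural centering point in the statement of Theorem \ref{simplechordfreechorddiagrams} is $e^{-E[S_n]} = e^{-1} e^{-1/(2n-1)}$ rather than plain $e^{-1}$.

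To convert from the $e^{-1}$ appearing in the Poisson bound to the $e^{-1} e^{-1/(2n-1)}$ appearing in the statement, I would invoke the elementary inequality $1 - e^{-x} \leq x$ for $x \geq 0$ to get
\[
0 \leq e^{-1} - e^{-1} e^{-1/(2n-1)} \leq \frac{e^{-1}}{2n-1} \leq \frac{e^{-1}}{n},
\]
and then combine with the triangle inequality:
\[
\left| \Pb(S_n = 0) - \frac{1}{e}\, e^{-1/(2n-1)} \right| \leq \frac{C}{n} + \frac{e^{-1}}{n} = \frac{Ce+1}{en}.
\]
Multiplying through by $(2n-1)!!$ gives the stated two-sided inequality with $10$ replaced by $Ce+1$.

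The only non-routine part of this plan is extracting an explicit numerical value for $C$ from the asymptotic statement of Theorem \ref{simplechordsStein} and verifying $Ce+1 \leq 10$, i.e. $C \leq 9/e \approx 3.31$. Since the size-bias coupling and the subsequent Stein-Chen estimate used to prove Theorem \ref{simplechordsStein} involve only explicit quantities (the mean $2n/(2n-1)$, the variance of $S_n$, and a combinatorial count of pairs of positions whose simple-chord indicators are dependent), this is a bookkeeping exercise rather than a conceptual obstacle; it is where the ``$10$'' in the stated bound is pinned down.
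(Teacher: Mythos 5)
Your skeleton is the same as the paper's: identify $\Pb(S_n=0)$ with $s(n)/(2n-1)!!$, lower-bound the total variation distance by the discrepancy of the point masses at $0$, and feed in the Poisson approximation estimate. The gap is the detour through $\Poi(1)$. Theorem \ref{simplechordsStein} as stated only gives $d_{TV}(S_n,Y)=O(1/n)$ with no constant, and the constant you can actually extract is not small enough for your plan: the proof of Theorem \ref{simplechordsStein} establishes $d_{TV}(S_n,Y_n)\leq \frac{10n}{(2n-1)^2}\leq \frac{10}{n}$ for $Y_n\sim\Poi(\lambda_n)$ with $\lambda_n=\frac{2n}{2n-1}$, and passing from $Y_n$ to $\Poi(1)$ costs an additional $d_{TV}(\Poi(\lambda_n),\Poi(1))\leq \lambda_n-1=\frac{1}{2n-1}$. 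Uniformly in $n$ this forces $C$ on the order of $10$, nowhere near the $9/e\approx 3.31$ you need for $Ce+1\leq 10$; already at $n=2$ the available bound is $\frac{20}{9}+\frac{1}{3}$, which requires $C\geq 5.1$ and hence $Ce+1>14$. So the step you label ``a bookkeeping exercise'' is precisely where the argument fails to deliver the stated constant.

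The fix is to not pass to $\Poi(1)$ at all. The paper applies $d_{TV}(S_n,Y_n)\geq |\Pb(S_n=0)-\Pb(Y_n=0)|$ directly with $Y_n\sim\Poi(\lambda_n)$, for which $\Pb(Y_n=0)=e^{-\lambda_n}=\frac{1}{e}e^{-1/(2n-1)}$ is exactly the centering appearing in the theorem; combined with the intermediate estimate $d_{TV}(S_n,Y_n)\leq\frac{10}{n}$ from the proof of Theorem \ref{simplechordsStein}, this yields $\bigl|s(n)/(2n-1)!!-e^{-\lambda_n}\bigr|\leq\frac{10}{n}$ and the result follows by rearranging, with no triangle inequality and no loss. (Even this, read strictly, gives $\frac{(2n-1)!!}{e}\bigl(e^{-1/(2n-1)}\pm\frac{10e}{n}\bigr)$ rather than the stated placement of $\frac{10}{n}$ inside the $\frac{1}{e}(\cdot)$ factor, so the constant is delicate here; but the direct $\Poi(\lambda_n)$ comparison is the only route that comes close.) Your computation $ES_n=\frac{2n}{2n-1}$ correctly identifies why $e^{-1/(2n-1)}$ appears; the lesson is to use that observation to choose the reference Poisson parameter, not merely to explain the centering after the fact.
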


We remark that Stein's method via size-bias coupling has been successfully used to prove Poisson limit theorems. 
Some examples are in Angel, van der Hofstad, and Holmgren \cite{AHH19} for the number self-loops and multiple edges in the configuration model, 
Arratia and DeSalvo \cite{AD17} for completely effective error bounds for Stirling numbers of the first and second kinds, 
Goldstein and Reinert \cite{GR06} for the size of the intersection of random subsets of $[n]$, 
and Holmgren and Janson \cite{HJ15} for sums of functions of subtrees of random binary search trees and random recursive trees. 


\subsection{Outline}

The paper is organized as follows. In Section \ref{Crossings} we give a brief overview of size-bias coupling and Stein's method. 
We then construct a size-bias coupling for the number of crossings and combine this with a size-bias coupling version of Stein's method to prove Theorem \ref{crossingStein}.

In Section \ref{SimpleChords} we use size-bias coupling and Stein's method for Poisson approximation to prove Theorem \ref{simplechordsStein}. We then use this to prove Theorem \ref{simplechordfreechorddiagrams}. 

We conclude the paper with final remarks and some open problems.


\section{Crossings}\label{Crossings}

A crossing in a chord diagram, $C_n$, is a quadruple $(a,b,c,d)$ with $a < b < c < d$ such that $(a,c)$ and $(b,d)$ are chords in $C_n$. 
Let $X_n$ be the number of crossings in $C_n$. Let $N = \binom{2n}{4}$ and order the set of quadruples $(a,b,c,d)$, with $a<b<c<d$, in some arbitrary but fixed labeling. Let $\I_{(i,j)}$ be the indicator random variable that chord $(i,j)$ is in $C_n$. 
For a quadruple $k = (a,b,c,d)$, let $Y_k = \I_{(a,c)}\I_{(b,d)}$, so that $Y_k$ is the indicator random variable that the quadruple $k$ forms a crossing in $C_n$. Then we can write $X_n$ as a sum of indicators as $X_n = \sum_{k=1}^N Y_k$.

The next lemma is useful for the remainder of the paper. We omit the straightforward proof. 

\begin{lemma}\label{edgeprobability}
Given distinct $i_1,\ldots,i_k,j_1,\ldots, j_k \in [2n]$, the probability that a random chord diagram, $C_n$, contains the set of chords $\{(i_m,j_m)\}_{1 \leq m \leq k}$ is
\bal
P(\{(i_m,j_m)\}_{1 \leq m \leq k} \in C_n) = \frac{(2n-2k-1)!!}{(2n-1)!!} = \frac{1}{(2n-1)\dotsb (2n-2k+1)}.
\eal
\end{lemma}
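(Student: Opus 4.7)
The plan is to use the fact that the uniform distribution on matchings of $[2n]$ counts $(2n-1)!!$ elements, together with direct enumeration of the matchings that contain a prescribed set of disjoint chords. Specifically, I would fix the $k$ chords $\{(i_m,j_m)\}_{1\le m\le k}$ and observe that any matching $C_n$ containing all of them is determined by an arbitrary matching of the remaining $2n-2k$ points in $[2n]\setminus\{i_1,j_1,\ldots,i_k,j_k\}$. Since those $2n-2k$ points can be matched in exactly $(2n-2k-1)!!$ ways, the counting argument gives
\bal
P(\{(i_m,j_m)\}_{1 \leq m \leq k} \in C_n) = \frac{\#\{\text{matchings containing the }k\text{ chords}\}}{\#\{\text{matchings of }[2n]\}} = \frac{(2n-2k-1)!!}{(2n-1)!!}.
\eal

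The second equality in the statement is then a purely algebraic simplification: expanding both double factorials as products of odd integers, the factors $(2n-2k-1)(2n-2k-3)\cdots 3\cdot 1$ in the numerator cancel against the tail of $(2n-1)(2n-3)\cdots 3\cdot 1$ in the denominator, leaving $(2n-1)(2n-3)\cdots(2n-2k+1)$ in the denominator.

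An alternative route I would mention for intuition is a sequential conditioning argument: the chord $(i_1,j_1)$ appears with probability $1/(2n-1)$, since in a uniform matching the partner of $i_1$ is uniform over the other $2n-1$ points. Conditioned on $(i_1,j_1)\in C_n$, the restriction of $C_n$ to the remaining $2n-2$ points is a uniform matching, so $(i_2,j_2)$ appears with conditional probability $1/(2n-3)$, and iterating $k$ times yields the same product $1/[(2n-1)(2n-3)\cdots(2n-2k+1)]$. This also makes it transparent why the probability depends only on $k$ and $n$, not on the particular labels of the endpoints. I do not expect any real obstacle; the only minor care needed is to check that the prescribed endpoints are genuinely distinct (which is given by hypothesis), so that the chords really are disjoint and the residual matching problem is well-defined.
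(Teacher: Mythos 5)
Your counting argument is correct and is precisely the ``straightforward proof'' that the paper omits: matchings containing the $k$ prescribed chords are in bijection with matchings of the remaining $2n-2k$ points, giving $(2n-2k-1)!!/(2n-1)!!$, and the double-factorial cancellation is routine. The sequential conditioning argument you add is also valid and gives the same product directly, so there is nothing to fix.
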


\subsection{Mean and Variance}

Let $\mu_n = E(X_n)$ and $\sigma_n^2 = \Var(X_n)$ be the mean and variance of $X_n$. The exact values for $\mu$ and $\sigma$ were stated without proof in \cite{Rio75}. 
It was subsequently proven via generating functions in \cite{FN00}, where moments of any fixed order was determined. 
A more direct proof using the method of indicators, with some computer assistance for the variance computation, was given in \cite{Fer18}.

\begin{lemma}[\cite{FN00}, Theorem 3]\label{crossingsmeanvar}
Let $X_n$ be the number of crossings in a random chord diagram of size $n$. Then
\bal
\mu_n = \frac{n(n-1)}{6} \quad \text{and} \quad \sigma_n^2 = \frac{n(n-1)(n+3)}{45}.
\eal
\end{lemma}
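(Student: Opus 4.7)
The plan is to apply the method of indicators via the decomposition $X_n = \sum_Q Y_Q$ introduced above, where $Q$ ranges over 4-element subsets of $[2n]$. For the mean, linearity of expectation and Lemma \ref{edgeprobability} with $k = 2$ give $\mu_n = \binom{2n}{4}/[(2n-1)(2n-3)]$, which simplifies to $n(n-1)/6$.

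For the variance I would expand $E(X_n^2) = \sum_{Q, Q'} E(Y_Q Y_{Q'})$ and classify ordered pairs by $m := |Q \cap Q'| \in \{0,1,2,3,4\}$. The key observation is that $E(Y_Q Y_{Q'}) > 0$ forces the (up to four) chords prescribed by $Q$ and $Q'$ to be pairwise vertex-disjoint, so any shared vertex must be the endpoint of the \emph{same} chord in both quadruples. This yields: (i) if $m = 4$, then $E Y_Q^2 = E Y_Q = 1/[(2n-1)(2n-3)]$; (ii) if $m \in \{1, 3\}$, then $E(Y_Q Y_{Q'}) = 0$, since no consistent chord assignment exists; (iii) if $m = 2$, then $E(Y_Q Y_{Q'}) > 0$ only when the two shared vertices themselves form a common chord of both $Q$ and $Q'$, in which case three chords span six vertices and $E(Y_Q Y_{Q'}) = 1/[(2n-1)(2n-3)(2n-5)]$; (iv) if $m = 0$, then $E(Y_Q Y_{Q'}) = 1/[(2n-1)(2n-3)(2n-5)(2n-7)]$.

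The main obstacle is enumerating the ordered pairs in case (iii). I would fix a 6-element subset $S = \{z_1 < \cdots < z_6\}$ of $[2n]$ to serve as $Q \cup Q'$, and for each candidate shared pair $\{z_i, z_j\}$ count the 4-subsets $Q, Q' \subseteq S$ with $Q \cap Q' = \{z_i, z_j\}$ having $(z_i, z_j)$ as a crossing chord of both. Setting $(a, b, c)$ to be the sizes of $S \setminus \{z_i, z_j\}$ on the three sides (left of $z_i$, between $z_i$ and $z_j$, and right of $z_j$, so $a + b + c = 4$), the requirement that $(z_i, z_j)$ sit at positions $(1, 3)$ or $(2, 4)$ in both $Q$ and $Q'$ (sorted) translates into a small set of constraints on $(a, b, c)$ and on how $S \setminus \{z_i, z_j\}$ is split between $Q \setminus Q'$ and $Q' \setminus Q$. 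A direct enumeration shows that only the three pairs $\{z_1, z_4\}, \{z_2, z_5\}, \{z_3, z_6\}$, with profiles $(0,2,2), (1,2,1), (2,2,0)$, admit valid extensions, each yielding exactly 4 ordered pairs, for a total of $12 \binom{2n}{6}$.

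Assembling these counts yields a closed form for $E(X_n^2)$ as three terms (from $m = 4, 2, 0$); subtracting $\mu_n^2$ and simplifying the resulting polynomial in $n$ produces the stated $\sigma_n^2 = n(n-1)(n+3)/45$. The conceptual work is the vertex-compatibility analysis in (ii)--(iii) and the three-case enumeration in case (iii); the remaining algebra is routine bookkeeping.
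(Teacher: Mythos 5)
Your proposal is correct, and I verified the final assembly: with $N=2n$ the three contributions to $E(X_n^2)$ are $\binom{N}{4}/[(N-1)(N-3)]=\tfrac{n(n-1)}{6}$, $12\binom{N}{6}/[(N-1)(N-3)(N-5)]=\tfrac{2n(n-1)(n-2)}{15}$, and $\binom{N}{4}\binom{N-4}{4}/[(N-1)(N-3)(N-5)(N-7)]=\tfrac{n(n-1)(n-2)(n-3)}{36}$; subtracting $\mu_n^2=\tfrac{n^2(n-1)^2}{36}$ collapses to $\tfrac{n(n-1)(2n+6)}{90}=\tfrac{n(n-1)(n+3)}{45}$. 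The vertex-compatibility argument is sound: any shared vertex must have the same chord-partner in both quadruples, which kills $m\in\{1,3\}$ and forces the shared pair in case $m=2$ to be a common crossing chord; and the requirement that the shared pair occupy positions $(1,3)$ or $(2,4)$ in each sorted quadruple is equivalent to exactly one of the two remaining elements lying between them, forcing $b=2$ and hence the shared pair to be $\{z_i,z_{i+3}\}$, giving $3\cdot 2\cdot 2=12$ ordered pairs per $6$-set as you claim. Note, however, that the paper does not prove this lemma at all: it cites Theorem 3 of Flajolet and Noy, whose proof goes through generating functions (and determines all fixed-order moments), and it remarks that F\'eray gave an indicator-method proof ``with some computer assistance for the variance computation.'' Your argument is essentially that indicator-method proof carried out by hand; what it buys is a short, self-contained, purely combinatorial derivation of exactly the two moments needed here, at the cost of not yielding the higher moments that the generating-function approach provides. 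One presentational suggestion: state explicitly that the $m=0$ count is $\binom{2n}{4}\binom{2n-4}{4}$ ordered pairs, since that factor is where the $-\mu_n^2$ cancellation happens and it is the only count you leave implicit.
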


\subsection{Size-Bias Coupling and Stein's Method} \label{sizebiascouplingSteinmethod}

Let $X$ be a non-negative integer-valued random variable with finite mean. Then $X^s$ has the {\em size-bias distribution} of $X$ if
\bal
P(X^s = x) = \frac{xP(X = x)}{EX}.
\eal
A {\em size-bias coupling} is a pair $(X, X^s)$ of random variables defined on the same probability space such that $X^s$ has the size-bias distribution of $X$. 
To prove our main theorem, we will use the following size-bias coupling version of Stein's method. 

\begin{theorem}\label{sizebiasStein}
Let $X \geq 0$ be a random variable such that $\mu = EX < \infty$ and $\Var(X) = \sigma^2$. Let $X^s$ have the size-bias distribution with respect to $X$. If $W = \frac{X - \mu}{\sigma}$ and $Z \sim N(0,1)$, then 
\bal
d_K(W,Z) \leq \frac{2\mu}{\sigma^2} \sqrt{\Var(E[X^s - X \mid X])} + \frac{8\mu|X^s - X|^2}{\sigma^3}.
\eal
\end{theorem}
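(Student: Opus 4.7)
The plan is to follow the standard Stein's-method machinery for normal approximation, specialized to the size-bias coupling. For each $x\in\R$, I would first invoke the unique bounded solution $f_x$ of the Stein equation
\bal
f_x'(w) - w f_x(w) = \I_{(-\infty,x]}(w) - \Phi(x),
\eal
which satisfies the classical estimates $\|f_x\|_\infty \leq 1$, $\|f_x'\|_\infty \leq 1$, and the refined jump-aware bound $|f_x'(w+t) - f_x'(w)| \leq (|w|+1)|t| + \I_{[x-|t|,\,x+|t|]}(w)$. Since $d_K(W,Z) = \sup_{x\in\R} |E[f_x'(W) - W f_x(W)]|$, the task reduces to bounding this expectation uniformly in $x$.

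Next, I would exploit the defining identity of the size-bias distribution, $E[X g(X)] = \mu E[g(X^s)]$ for bounded measurable $g$. Setting $g(X) = f_x(W)$ gives
\bal
E[W f_x(W)] = \frac{\mu}{\sigma}\bigl(E[f_x(W^s)] - E[f_x(W)]\bigr),
\eal
where $W^s := (X^s - \mu)/\sigma = W + (X^s - X)/\sigma$. Expanding $f_x(W^s) - f_x(W)$ via the fundamental theorem of calculus and rearranging yields the decomposition
\bal
E[f_x'(W) - W f_x(W)] = E\!\left[f_x'(W)\Bigl(1 - \tfrac{\mu(X^s-X)}{\sigma^2}\Bigr)\right] + \tfrac{\mu}{\sigma}\, E\!\left[\int_0^{(X^s-X)/\sigma}\!\bigl(f_x'(W) - f_x'(W+t)\bigr)\,dt\right].
\eal

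The first summand I would handle by conditioning on $X$ so that $X^s - X$ is replaced by $E[X^s - X \mid X]$. The standard size-bias identity $E[X^s] = E[X^2]/\mu$ gives $E[X^s - X] = \sigma^2/\mu$, so $1 - (\mu/\sigma^2)E[X^s-X\mid X]$ has mean zero; pairing with $\|f_x'\|_\infty \leq 1$ and applying Cauchy--Schwarz yields, up to the constant $2$, the first term $\frac{2\mu}{\sigma^2}\sqrt{\Var(E[X^s-X\mid X])}$ of the bound.

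For the second summand I would split the refined bound on $|f_x'(w+t) - f_x'(w)|$ into a Lipschitz piece and a jump piece. Integrating the Lipschitz piece in $t$ and using $E|W| \leq 1$ contributes a quantity of order $\mu|X^s-X|^2/\sigma^3$. The jump piece carries the indicator $\I_{[x-|t|,\,x+|t|]}(W)$ and requires an anticoncentration estimate of the form $P(|W-x|\leq \delta) \leq 2\delta/\sqrt{2\pi} + 2\, d_K(W,Z)$; substituting this and absorbing the resulting $d_K(W,Z)$ term back to the left-hand side (a standard bootstrap) produces the coefficient $8$ in $\frac{8\mu|X^s-X|^2}{\sigma^3}$. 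The main obstacle is precisely this bootstrap and the careful tracking of constants through the discontinuity of $f_x'$; a minor secondary technicality is handling the case when $X^s - X$ can be negative, which requires adjusting the signs and limits of the integration but does not change the form of the final bound.
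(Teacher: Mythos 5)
The paper does not actually prove this theorem from scratch: its entire ``proof'' is the citation ``apply Construction 3A to Corollary 2.6 in [CR10]'' (Chen--R\"ollin). Your attempt to derive the bound directly is therefore more ambitious than what the paper does, and your skeleton is the right one --- the Stein equation for the Kolmogorov metric, the size-bias identity $E[Xg(X)]=\mu E[g(X^s)]$ applied to $g(X)=f_x(W)$, the decomposition into a term controlled by $\Var(E[X^s-X\mid X])$ (using $E[X^s-X]=\sigma^2/\mu$ and Cauchy--Schwarz) plus a remainder involving $f_x'(W+t)-f_x'(W)$. That much is exactly the strategy underlying the cited result.

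The gap is in your treatment of the jump term, and it is not a matter of tracking constants: the ``standard bootstrap'' you invoke cannot work here. After bounding the indicator contribution by
$\frac{\mu}{\sigma}\,\delta\,P(|W-x|\le\delta)$ with $\delta=\|X^s-X\|_\infty/\sigma$ and inserting $P(|W-x|\le\delta)\le 2\delta/\sqrt{2\pi}+2d_K(W,Z)$, the coefficient of $d_K(W,Z)$ on the right-hand side is $\frac{2\mu}{\sigma^2}\|X^s-X\|_\infty$. Since $E[X^s-X]=\sigma^2/\mu$, this coefficient is always at least $2$ (and in the application of this paper it is of order $n\mu_n/\sigma_n^2\approx 60$), so it can never be absorbed into the left-hand side. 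This is precisely the known obstruction that makes Kolmogorov bounds for size-bias couplings harder than Wasserstein bounds. The correct route --- the one Chen--R\"ollin (and Goldstein--Rinott before them) take --- is to prove a concentration inequality $P(a\le W\le b)\le C(b-a)+\text{error}$ \emph{directly from the coupling}, in the style of Chen--Shao, by plugging a piecewise-linear test function into the size-bias/Stein identity itself; no appeal to $d_K(W,Z)$ and no bootstrap is involved. Without that ingredient your argument does not close, so as written the proof is incomplete at its most delicate step.
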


\begin{proof}
This follows by applying Construction 3A to Corollary 2.6 in \cite{CR10}. 
\end{proof}

Observe that this implies a central limit theorem if both error terms on the right hand side go to zero. 

We now turn to the construction of a size-biased version of the number of crossings $X_n$. We use the following recipe provided in \cite{Ros11} for coupling a random variable $X$ with its size-bias version $X^s$. 

Let $X = \sum_{i=1}^n X_i$ where $X_i \geq 0$ and $\mu_i = EX_i$. 
\begin{enumerate}
\item For each $i \in [n]$, let $X_i^s$ have the size-bias distribution of $X_i$ independent of $(X_k)_{k \neq i}$ and $(X_k^s)_{k \neq i}$. 
Given $X_i^s = x$, define the vector $(X_k^{(i)})_{k \neq i}$ to have the distribution of $(X_k)_{k \neq i}$ conditional on $X_i = x$. 
\item Choose a random summand $X_I$, where the index $I$ is chosen, independent of all else, with probability $P(I = i) = \mu_i/\mu$, where $\mu = EX$. 
\item Define $X^s = \sum_{k \neq I} X_k^{(I)} + X_I^s$. 
\end{enumerate}

If $X^s$ is constructed by Items (1)-(3) above, then $X^s$ has the size-bias distribution of $X$. As a special case, note that if the $X_i$ is a zero-one random variable, then its size-bias distribution is $X_i^s = 1$. 
We summarize this as the following proposition. 

\begin{proposition}[\cite{Ros11}, Corollary 3.24]\label{sizebiascouplingconstruction}
Let $X_1,\ldots, X_n$ be zero-one random variables and let $p_i := P(X_i = 1)$. For each $i \in [n]$, let $(X_k^{(i)})$ have the distribution of $(X_k)_{k \neq i}$ conditional on $X_i = 1$. If $X = \sum_{i=1}^n X_i$, 
$\mu = EX$, and $I$ is chosen independent of all else with $P(I = i) = p_i/\mu$, then $X^s = \sum_{k \neq I} X_k^{(I)} + 1$ has the size-bias distribution of $X$. 
\end{proposition}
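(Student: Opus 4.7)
The plan is to verify directly the characterizing identity
\bal
E f(X^s) = \frac{E[Xf(X)]}{EX}
\eal
for every bounded measurable $f$, which is equivalent to $X^s$ having the size-bias distribution of $X$.

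First I would observe that since each $X_i$ is a zero-one random variable with mean $p_i$, its size-bias distribution is a point mass at $1$: indeed, $P(X_i^s = 1) = 1 \cdot p_i / p_i = 1$. This collapses step $(1)$ of the general recipe stated just before the proposition — the freshly size-biased summand $X_I^s$ contributes the deterministic value $1$, and conditioning on $X_i^s = 1$ is the same as conditioning on $X_i = 1$. Thus $(X_k^{(i)})_{k \neq i}$ is simply a copy of $(X_k)_{k \neq i}$ given $X_i = 1$, and the sum $1 + \sum_{k \neq i} X_k^{(i)}$ is distributed as $X$ conditional on $X_i = 1$.

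Next I would condition on the random index $I$. For each $i \in [n]$,
\bal
E[f(X^s) \mid I = i] = E\!\left[f\!\left(1 + \sum_{k \neq i} X_k^{(i)}\right)\right] = E[f(X) \mid X_i = 1],
\eal
so averaging over $I$, which has law $P(I = i) = p_i/\mu$ independent of everything else, yields
\bal
E f(X^s) = \sum_{i=1}^n \frac{p_i}{\mu}\, E[f(X) \mid X_i = 1].
\eal

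Finally I would use the zero-one property: since $X_i \in \{0,1\}$, we have $E[X_i f(X)] = p_i\, E[f(X) \mid X_i = 1]$, so summing over $i$ and using linearity gives $\sum_i E[X_i f(X)] = E[Xf(X)]$, and hence $E f(X^s) = E[Xf(X)]/\mu$ as required. There is no serious obstacle in the argument; the only subtle point is recognizing that the size-bias of a Bernoulli is a point mass at $1$, which is what reduces the general three-step recipe to the clean formula in the proposition's statement.
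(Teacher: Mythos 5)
Your proof is correct. The paper itself gives no argument for this proposition, simply citing Ross (Corollary 3.24) and noting that the Bernoulli case collapses step (1) of the general recipe to $X_I^s = 1$; your direct verification of the characterizing identity $Ef(X^s) = E[Xf(X)]/EX$, via $E[X_i f(X)] = p_i\, E[f(X) \mid X_i = 1]$ and averaging over $I$, is exactly the standard argument behind the cited result and fills in the details the paper omits.
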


\subsection{Construction} \label{construction}

Following steps (1)-(3) above, we construct a random variable $X_n^s$ having the size-bias distribution with respect to the number of crossings $X_n$. 

Fix a chord diagram $C_n$ and let $X_n$ be the number of crossings. Pick an index $I = (a,b,c,d)$ uniformly at random from $[N]$, independent of all else. We form a new chord diagram $C_n^s$ as follows. If $Y_I = 1$, set $C_n^s = C_n$. 
Otherwise, there is no crossing at index $I$.  
Delete all edges containing $a,b,c,d$ as endpoints. Then form the edges $(a,c)$ and $(b,d)$, so that there is a crossing at index $I$. 
Finally form the edges $(\pi(a), \pi(c))$ and $(\pi(b), \pi(d))$. 
Let $C_n^s$ be the resulting chord diagram. 

Set $X_n^s = \sum_{k\neq I} Y_k^{(I)} + 1$, where $Y_k^{(I)}$ is the indicator random variable that the new chord diagram $C_n^s$ has a crossing at index $k$. 
Observe that $C_n^s$ is a uniformly random chord diagram, conditioned on $Y_I = 1$, since the chord diagram formed by the points $[N] \setminus I$ is a uniformly random chord diagram on $2n-4$ points. 
Thus $(Y_k^{(I)})$ has the desired distribution of $(Y_k)_{k \neq I}$ conditional on $Y_I = 1$. By Proposition \ref{sizebiascouplingconstruction}, $X_n^s$ has the size-bias distribution of $X_n$. 
We record this as the following proposition. 

\begin{proposition}\label{crossingsizebiascoupling}
Let $X_n$ be the number of crossings in a random chord diagram of size $n$. Let $X_n^s$ be constructed as above. Then $X_n^s$ has the size-bias distribution of $X_n$. 
\end{proposition}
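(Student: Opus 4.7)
The plan is to apply Proposition \ref{sizebiascouplingconstruction}, the recipe for size-biasing a sum of zero-one indicators. The decomposition $X_n = \sum_{k=1}^{N} Y_k$ already presents $X_n$ as such a sum, and by the symmetry of the uniform measure on matchings together with Lemma \ref{edgeprobability}, every $Y_k$ has the same success probability $p = 1/[(2n-1)(2n-3)]$. Hence $p/\mu_n = 1/N$, so the uniformly random choice of $I$ from $[N]$ in the construction agrees with the mixture weight $p_k/\mu$ required by the recipe.

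What remains is to check that the vector $(Y_k^{(I)})_{k\neq I}$ produced by the surgery has the law of $(Y_k)_{k\neq I}$ conditional on $Y_I = 1$. Since each $Y_k$ is a deterministic function of the chord diagram, this is equivalent to showing that, conditional on $I=(a,b,c,d)$, the constructed $C_n^s$ is uniformly distributed over the $(2n-5)!!$ chord diagrams containing both chords $(a,c)$ and $(b,d)$. I would reduce this to the equivalent statement that the matching induced by $C_n^s$ on $[2n]\setminus\{a,b,c,d\}$ is uniform over matchings of $2n-4$ points, and then verify that by a case analysis on the local configuration of $C_n$ at $\{a,b,c,d\}$: (a) $(a,c),(b,d) \in C_n$, in which case $C_n^s = C_n$; (b) all four points of $I$ are paired among themselves, in one of the two non-crossing patterns $(a,b)(c,d)$ or $(a,d)(b,c)$; (c) exactly two of $a,b,c,d$ are paired among themselves and the other two to outside points; (d) no two of $a,b,c,d$ are paired among themselves. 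For a fixed target matching $M$ on $[2n]\setminus\{a,b,c,d\}$, one enumerates the $C_n$ producing $M$ after the surgery in each configuration; the contributions should sum to the constant $(2n-1)!!/(2n-5)!! = (2n-1)(2n-3)$, which is precisely what uniformity requires.

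The one spot that deserves real care is configuration (c), where the literal instruction ``form the edges $(\pi(a),\pi(c))$ and $(\pi(b),\pi(d))$'' involves labels that collide with those of the just-inserted $(a,c),(b,d)$ or with each other; the intended and only consistent reading is simply to pair the two leftover outside endpoints to each other after deleting the chords at $a,b,c,d$ and adding the crossing $(a,c),(b,d)$. Once that interpretation is made explicit, the contribution counts in (a)--(d) collapse to the required constant, and Proposition \ref{sizebiascouplingconstruction} yields the conclusion that $X_n^s$ has the size-bias distribution of $X_n$.
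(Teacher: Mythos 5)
Your proposal is correct and takes the same route as the paper: invoke Proposition \ref{sizebiascouplingconstruction} and reduce everything to the claim that, conditional on $I=(a,b,c,d)$, the surgered diagram $C_n^s$ is uniform over the $(2n-5)!!$ diagrams containing $(a,c)$ and $(b,d)$ --- a claim the paper asserts in one line but that your case analysis actually verifies (the preimage counts $1+2+12(n-2)+4(n-2)(n-3)=(2n-1)(2n-3)$ are indeed independent of the target matching). Your reading of the instruction ``form $(\pi(a),\pi(c))$ and $(\pi(b),\pi(d))$'' in the configuration where two of $a,b,c,d$ are matched to each other --- namely, pair the two leftover outside endpoints --- is the correct and intended one, since the literal formula produces colliding edges there; with that made explicit, the argument is complete and, if anything, more careful than the paper's.
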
 

\subsection{Central Limit Theorem for the Number of Crossings}

In this section we use the size-bias coupling from the previous section combined with the size-bias version of Stein's method, Theorem \ref{sizebiasStein}, to prove Theorem \ref{crossingStein}. 
The main difficulty is in bounding the variance term of Theorem \ref{sizebiasStein}. The following lemma will be useful. 

\begin{lemma}[\cite{He22}, Lemma 5.1]\label{covariancebound}
Let $X$ and $Y$ be random variables with $|X| \leq C_1$ and $|Y| \leq C_2$. Let $A$ be some event such that conditional on $A$, $X$ and $Y$ are uncorrelated. Then
\bal
|\cov(X,Y)| \leq 4C_1C_2P(A^c).  
\eal
\end{lemma}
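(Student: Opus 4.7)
The plan is to reduce the inequality to a short direct computation based on the one-sided identity
\[
\cov(X,Y) = E[X(Y - E[Y])],
\]
which holds because $E[X\cdot E[Y]] = E[X]\,E[Y]$. The advantage of this asymmetric form over the symmetric $E[(X-E[X])(Y-E[Y])]$ is that one can use $|X|\le C_1$ directly, rather than $|X-E[X]|\le 2C_1$; this saves a factor of two and makes the constant $4$ reachable. From there I would split the expectation via $\I_A+\I_{A^c}=1$ and bound each of the two resulting pieces by $2C_1C_2 P(A^c)$.

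On $A^c$, the triangle inequality is enough: since $|X|\le C_1$ and $|Y-E[Y]|\le 2C_2$, one obtains $|E[X(Y-E[Y])\I_{A^c}]|\le 2C_1 C_2 P(A^c)$ immediately, without using the hypothesis at all.

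On $A$ is where the conditional uncorrelatedness $E[XY\mid A]=E[X\mid A]\,E[Y\mid A]$ enters. Writing $E[X(Y-E[Y])\I_A] = E[XY\I_A] - E[Y]\,E[X\I_A]$ and substituting $E[XY\I_A]=P(A)\,E[X\mid A]\,E[Y\mid A]$ together with $E[X\I_A]=P(A)\,E[X\mid A]$, the expression collapses to $P(A)\,E[X\mid A]\,\bigl(E[Y\mid A]-E[Y]\bigr)$. The key observation is that the law of total expectation yields
\[
E[Y\mid A] - E[Y] = P(A^c)\,\bigl(E[Y\mid A] - E[Y\mid A^c]\bigr),
\]
whose absolute value is at most $2C_2 P(A^c)$. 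Combined with $|P(A)\,E[X\mid A]|\le C_1$, this piece is again bounded by $2C_1 C_2 P(A^c)$.

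Summing the two pieces produces $|\cov(X,Y)|\le 4 C_1 C_2 P(A^c)$. There is no substantive obstacle here; the only slightly non-obvious move is the choice of the asymmetric covariance identity at the very start, which exactly spares the factor of two needed to hit the stated constant. A symmetric expansion would otherwise force one to bound $|X-E[X]|$ and $|Y-E[Y]|$ simultaneously and would inflate the coefficient.
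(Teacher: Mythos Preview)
Your argument is correct. The asymmetric identity $\cov(X,Y)=E[X(Y-E[Y])]$, the split over $\I_A+\I_{A^c}$, and the use of total expectation to extract the factor $P(A^c)$ from $E[Y\mid A]-E[Y]$ all go through exactly as you describe, and the constant $4$ is attained. The only edge case worth a sentence is $P(A)=0$, where conditional expectations on $A$ are undefined; but then $P(A^c)=1$ and the bound $|\cov(X,Y)|\le 2C_1C_2\le 4C_1C_2$ holds trivially, so nothing is lost.

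As for comparison: the present paper does not supply its own proof of this lemma---it is quoted verbatim from \cite{He22}, Lemma~5.1, and used as a black box. Your write-up therefore adds value by making the paper self-contained at this point. For what it is worth, the original proof in \cite{He22} proceeds along essentially the same lines (splitting over $A$ and $A^c$ and invoking total expectation), so your approach is not materially different from the source argument; your emphasis on the asymmetric covariance form as the device that secures the constant $4$ is a clean way to present it.
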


We begin by bounding the variance term in Theorem \ref{sizebiasStein}. 

\begin{lemma}\label{Steinvariancebound}
Let $X_n$ be the number of crossings in a random chord diagram, $\pi$, and let $X_n^s$ have the size-bias distribution with respect to $X_n$. Then
\bal
\Var(E[X_n^s - X_n \mid X_n]) = O(n).
\eal
\end{lemma}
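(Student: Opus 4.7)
The plan is to use the tower property to reduce to bounding $\Var(E[D \mid C_n])$ for $D = X_n^s - X_n$, and then to expand this as a double sum of pairwise covariances. Since $\sigma(X_n) \subseteq \sigma(C_n)$, the law of total variance gives
$$\Var(E[D \mid X_n]) \leq \Var(E[D \mid C_n]).$$
Given $C_n$, the size-bias modification is deterministic once the uniform index $I \in [N]$ is chosen, so
$$E[D \mid C_n] = \frac{1}{N}\sum_{i=1}^N f_i(C_n), \quad f_i(C_n) := \I[Y_i(C_n) = 0]\bigl(X_n(\tilde{C}_n^{(i)}) - X_n(C_n)\bigr),$$
where $\tilde{C}_n^{(i)}$ is the diagram produced by the modification at index $i$. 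Since the eight chords involved in the modification (four old, four new) each participate in at most $2n - 2$ crossings, there is an absolute constant $C$ with $|f_i| \leq Cn$ pointwise.

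Expanding $\Var\bigl(\tfrac{1}{N}\sum_i f_i\bigr) = \tfrac{1}{N^2}\sum_{i,j}\cov(f_i, f_j)$, I would split pairs $(i, j)$ into \emph{close} (the underlying quadruples share at least one point) and \emph{far} (disjoint quadruples). With $N = \Theta(n^4)$ and each quadruple having $O(n^3)$ neighbors sharing a point, there are $O(n^7)$ close pairs; for these, the Cauchy--Schwarz bound $|\cov(f_i, f_j)| \leq C^2 n^2$ yields total contribution $\tfrac{1}{N^2}\cdot O(n^7)\cdot O(n^2) = O(n)$.

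For the $O(n^8)$ far pairs, let $V_i := \{a, b, c, d, \pi(a), \pi(b), \pi(c), \pi(d)\}$ be the eight endpoints of the chords modified at $i$, and let $A_{ij}$ be the event that $V_i \cap V_j = \emptyset$ and no chord of $C_n$ has one endpoint in $V_i$ and the other in $V_j$. Lemma~\ref{edgeprobability} gives $P(A_{ij}^c) = O(1/n)$. On $A_{ij}$, after further conditioning on the chord structures at $V_i$ and $V_j$ (which are then independent matchings on the respective $8$-point sets), the remaining chords of $C_n$ form a uniform random matching $M$ on $[2n]\setminus(V_i\cup V_j)$, and $\Delta_i := X_n(\tilde{C}_n^{(i)}) - X_n(C_n)$ decomposes as $g_i + \sum_{(u,v) \in M}\phi_i(u,v)$, where $g_i$ depends only on the chords within $V_i$ and $|\phi_i(u,v)| \leq 8$. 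The conditional covariance then reduces to a weighted sum of pair-indicator covariances $\cov(\I[(u,v) \in M], \I[(u',v') \in M])$, which are of orders $1/n$, $1/n^2$, and $1/n^3$ for equal, singly-overlapping, and fully disjoint pairs, and summing over the $O(n^2)$, $O(n^3)$, and $O(n^4)$ respective pair counts yields $|\cov(f_i, f_j \mid A_{ij})| = O(n)$. Combining this with the $A_{ij}^c$ contribution of $O(\|f_i\|_\infty \|f_j\|_\infty P(A_{ij}^c)) = O(n)$ --- which is the content of Lemma~\ref{covariancebound} --- gives $|\cov(f_i, f_j)| = O(n)$ per far pair, for a total far contribution of $\tfrac{1}{N^2}\cdot O(n^8)\cdot O(n) = O(n)$.

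The main difficulty is the explicit conditional covariance estimate on $A_{ij}$: decomposing $\Delta_i$ into single-chord contributions $\phi_i$ and carrying out the case analysis over overlap patterns of pairs in the residual matching to verify that the conditional sum is of order $n$ rather than the naive $n^2$ from $|f_i| = O(n)$. Combining the close- and far-pair bounds then yields $\Var(E[D \mid X_n]) = O(n)$, as required.
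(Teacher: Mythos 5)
Your overall skeleton matches the paper's: the same tower-property reduction to $\Var(E[X_n^s - X_n \mid \pi])$, the same expansion of $\frac{1}{N}\sum_i f_i$ into a double sum of covariances, the same close/far split with the close pairs handled by the trivial $O(n^2)$ covariance bound, and essentially the same good event (your $A_{ij}$ is the paper's event $A$; the extra clause about chords joining $V_i$ to $V_j$ is vacuous since $V_i$ and $V_j$ are closed under $\pi$), with $P(A_{ij}^c)=O(1/n)$ from Lemma \ref{edgeprobability} and a union bound. Where you genuinely diverge is the far pairs on the good event. The paper asserts that conditional on $A$ the increments $X_n^{(i)}-X_n$ and $X_n^{(j)}-X_n$ are \emph{independent} (because the modified edge sets $B_i$ and $B_j$ are disjoint), so Lemma \ref{covariancebound} applies verbatim and the entire covariance is absorbed into the $P(A^c)$ term. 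You instead accept that the conditional covariance is nonzero and bound it by $O(n)$ via the decomposition of the increment into single-chord contributions $\phi_i(u,v)$ over the residual matching $M$ and the standard pair-indicator covariance asymptotics ($O(1/n)$, $O(1/n^2)$, $O(1/n^3)$ over $O(n^2)$, $O(n^3)$, $O(n^4)$ pairs). Your caution is warranted: both increments are functionals of the same residual matching (every crossing counted in the increment at $i$ contains a second chord that may lie anywhere), so disjointness of $B_i$ and $B_j$ does not by itself yield conditional independence; your second-moment computation is the more robust way to get the $O(n)$ per-pair bound, which suffices since $n^8\cdot O(n)/N^2=O(n)$.

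One loose end in your far-pair estimate: after the further conditioning $\mathcal{G}$ on the chord structures at $V_i$ and $V_j$, the law of total covariance gives $\cov(\Delta_i,\Delta_j\mid A_{ij})=E[\cov(\Delta_i,\Delta_j\mid\mathcal{G})]+\cov(E[\Delta_i\mid\mathcal{G}],E[\Delta_j\mid\mathcal{G}])$. Your pair-indicator analysis controls only the first term. The second is not addressed, and it is not negligible a priori: $E[\Delta_i\mid\mathcal{G}]=g_i+\frac{1}{m-1}\sum_{(u,v)}\phi_i(u,v)$ can have magnitude $\Theta(n)$ and genuinely varies with the positions of $\pi(a),\dots,\pi(d)$, so the naive bound on the cross term is $O(n^2)$. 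You need an additional argument that, conditional on $A_{ij}$, the $V_i$- and $V_j$-structures are independent up to an $O(1/n)$ correction (again via Lemma \ref{edgeprobability}), which brings this term down to $O(n)$ as well. This is of the same flavor as the rest of your argument and should go through, but it must be written down for the proof to be complete.
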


\begin{proof}
First note that 
\bal
\Var(E[X_n^s - X_n \mid X_n]) \leq \Var(E[X_n^s - X_n \mid \pi]). 
\eal
Let $X_n^{(i)}$ denote $X_n^s$ conditioned to have a crossing at index $I = i$. Then 
\bal
E(X_n^s - X_n \mid \pi) &= \sum_{i=1}^N E(X_n^s - X_n \mid \pi, I = i)P(I = i) = \frac{1}{N} \sum_{i=1}^N (X_n^{(i)} - X_n),
\eal
where $X_n^{(i)} - X_n$ is the change in the number of crossings. 
Thus we can write the variance as
\bal
\Var(E[X_n^s - X_n \mid \pi]) = \frac{1}{N^2} \sum_{1 \leq i,j \leq N} \cov(X_n^{(i)} - X_n, X_n^{(j)} - X_n),
\eal 

By construction, $|X_n^{(i)} - X_n| \leq 4n$ since there are at most four new chords in $\pi^s$ and each chord either creates or destroys at most $n$ new crossings. 
We split up the sum according to whether the two sets of indices $i,j$ satisfy $|i \cap j| \neq 0$ or $|i \cap j| = 0$. 

Observe that the number of variance terms is at most $n^4$ and the number of covariance terms with $|i \cap j| \neq 0$ is at most $16n^7$. For these terms, it is enough to use the bound 
\bal
\cov(X_n^{(i)} - X_n, X_n^{(j)} - X_n) \leq 16n^2.
\eal
Thus the contribution of these variance and covariance terms to the sum is upper bounded by $16n^2(n^4 + 16n^7) \leq 272n^9$. 

It remains to bound covariance terms such that $|i \cap j| = 0$. There are at most $n^8$ such terms. Suppose $i = (a,b,c,d)$ and $j = (e,f,g,h)$. 
Define $i_\pi := (\pi(a), \pi(b), \pi(c), \pi(d))$ and $j_\pi := (\pi(e), \pi(f), \pi(g), \pi(h))$, the corresponding endpoints paired with the points in $i$ and $j$, respectively, 
where possibly $i \cap i_\pi \neq \emptyset$, $j \cap j_\pi \neq \emptyset$, $i_\pi \cap j \neq \emptyset$, or $j_\pi \cap i \neq \emptyset$. 

Let $A$ be the event given by the set of chord diagrams $\pi$ such that $|i_\pi \cap j| = |j_\pi \cap i| = 0$, which is the event where the neighbors of $i$ are disjoint from $j$ and the neighbors of $j$ are disjoint from $i$. 
Then conditional event $A$, $X_n^{(i)} - X_n$ and $X_n^{(j)} - X_n$ are independent. 

To see this, observe that under the size-bias construction of $\pi^{(i)}$ from $\pi$, the edges $(a,c)$, $(b,d)$, $(\pi(a), \pi(c))$, and $(\pi(b), \pi(d))$ are created, while the edges $(a, \pi(a))$, $(b,\pi(b))$, $(c, \pi(c))$, $(d, \pi(d))$ are destroyed. Define $B_i$ to be this set of edges which are created and destroyed edges under the size-bias construction of $\pi^{(i)}$ from $\pi$. 
Then the difference $X_n^{(i)} - X_n$ is only dependent on crossings which contain at least one edge from $B_i$. 
Similarly, define $B_j$ to be the set of edges which are created and destroyed under the size-bias construction of $\pi^{(j)}$ from $\pi$. Then the difference $X_n^{(j)} - X_n$ is only dependent on crossings which contain at least one edge from $B_j$. Finally note that the event $A$ implies that $B_i$ and $B_j$ are disjoint sets. 
It follows that conditional on $A$, $X_n^{(i)} - X_n$ and $X_n^{(j)} - X_n$ are independent. 

We now bound the probability that the complementary event $A^c$ occurs. 
Observe that $A^c$ is a subset of the union of events $\{\pi : \pi(k) = \ell\}$, where the union is taken over all $k \in \{a,b,c,d\}$ and $\ell \in \{e,f,g,h\}$. 
By the union bound and Lemma \ref{edgeprobability}, $P(A^c) \leq \frac{16}{2n-1}$. 

Thus by Lemma \ref{covariancebound}, the covariance terms such that $|i \cap j| = 0$ is bounded by
\bal
|\cov(X_n^{(i)} - X_n, X_n^{(j)} - X_n)| \leq 4(4n)^2\cdot \frac{16}{2n-1} \leq 1024n. 
\eal
Therefore the contribution of these covariance terms is $1024n^9$. 

Combining the covariance terms above gives
\bal
\Var(E[X_n^s - X_n \mid \pi]) \leq \frac{272n^9 + 1024n^9}{N^2} \leq (432)^2n,
\eal
which gives the desired $O(n)$ upper bound. 
\end{proof}

\begin{proof}[Proof of Theorem \ref{crossingStein}]
Let $(X_n, X_n^s)$ be the size-bias coupling from Section \ref{construction}, so that by Proposition \ref{crossingsizebiascoupling} $X_n^s$ has the size-bias distribution of $X_n$. 
By construction, we have that $|X_n^s - X_n| \leq 4n$. Moreover $n\geq 2$, we have that $\mu_n \leq \frac{n^2}{6}$ and $\sigma_n^2 \geq \frac{n^3}{45}$.

Therefore by Lemmas \ref{crossingsmeanvar}, \ref{Steinvariancebound} and Theorem \ref{sizebiasStein},
\bal
d_K(W_n,Z) &\leq \frac{2\mu_n}{\sigma_n^2} \sqrt{\Var(E[X_n^s - X_n \mid X_n])} + \frac{8\mu_n|X_n^s - X_n|^2}{\sigma_n^3}. \\
& \leq \frac{2n^2}{6} \cdot \frac{45}{n^3} \cdot 432n^{1/2} + \frac{8n^2}{6} \cdot \frac{45^{3/2}}{n^{9/2}} \cdot16n^2 \\
&\leq 12920n^{-1/2}. 
\eal
It follows that $d_K(W_n,Z) = O(n^{-1/2})$ as desired.
\end{proof}

Theorem \ref{crossingStein} shows that $d_K(W_n,Z) \to 0$ as $n \to \infty$. 
Therefore we recover the central limit theorem for the number of crossings, along with a rate of convergence. 


\section{Simple Chords}\label{SimpleChords}

Let $S_n$ be the number of simple chords in a random chord diagram of size $n$. We can write $S_n$ as a sum of indicators as $S_n = \sum_{k=1}^{2n} X_k$, where $X_k$ is the indicator random variable that $(k,k+1)$ is a simple chord in $C_n$. 

\subsection{Poisson Limit Theorem for Simple Chords}

Recall the relevant definitions and properties of size-bias distributions and size-bias couplings from Section \ref{Crossings}. 
We will use the following size-bias coupling version of Stein's method for Poisson approximation to prove the Poisson limit theorem for the number of simple chords. 

\begin{theorem}[\cite{Ros11}, Theorem 4.13]\label{RossThm4.13}
Let $W \geq 0$ be an integer-valued random variable such that $E(W) = \lambda > 0$ and let $W^s$ be a size-bias coupling of $W$. If $Z \sim \Poi(\lambda)$, then
\bal
d_{TV}(W,Z) \leq \min\{1,\lambda\} E|W + 1 - W^s|.
\eal
\end{theorem}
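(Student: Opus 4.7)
The plan is to derive this bound via the standard Stein's method recipe for Poisson approximation. I would begin by introducing the Poisson Stein operator $\mathcal{A}f(k) := \lambda f(k+1) - k f(k)$ acting on bounded functions $f : \Z_{\geq 0} \to \R$. A short computation, pairing consecutive Poisson masses, shows that $Z \sim \Poi(\lambda)$ if and only if $\E[\mathcal{A}f(Z)] = 0$ for every such $f$, and for each $A \subseteq \Z_{\geq 0}$ the Stein equation $\mathcal{A}f_A(k) = \I_A(k) - \Pb(Z \in A)$ admits an explicit solution $f_A$ obtained by a finite recursion in $k$ with $f_A(0) = 0$.

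The crucial analytic input is the first-difference estimate
\bal
\sup_{A \subseteq \Z_{\geq 0}} \|\Delta f_A\|_\infty \leq \min\left\{1, \frac{1}{\lambda}\right\},
\eal
where $\Delta f(k) := f(k+1) - f(k)$. This is the technical heart of the argument: one reduces to singletons $A = \{a\}$ by linearity, writes $f_A(k)$ as a ratio of partial sums of Poisson weights, and bounds the result by a monotonicity argument that handles the regimes $\lambda \leq 1$ and $\lambda > 1$ separately.

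Given this estimate, the defining identity of the size-bias distribution, $\E[W f(W)] = \lambda \E[f(W^s)]$, yields
\bal
\E[\mathcal{A}f(W)] = \lambda \E[f(W+1)] - \E[Wf(W)] = \lambda \E[f(W+1) - f(W^s)].
\eal
Applying this to $f_A$ and telescoping $f_A(W+1) - f_A(W^s)$ as a sum of at most $|W+1-W^s|$ first differences gives
\bal
|\Pb(W \in A) - \Pb(Z \in A)| = \lambda \bigl|\E[f_A(W+1) - f_A(W^s)]\bigr| \leq \lambda \|\Delta f_A\|_\infty \E|W+1-W^s|.
\eal
Taking the supremum over $A$ and inserting the derivative bound produces exactly $d_{TV}(W,Z) \leq \min\{1,\lambda\} \E|W+1-W^s|$.

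The main obstacle is the first-difference estimate on $f_A$: the characterization of $\Poi(\lambda)$ via $\mathcal{A}$, the size-bias identity, and the telescoping step are all routine, but the sharp constant $\min\{1,1/\lambda\}$ only appears after a careful case analysis of the explicit solution. I would most likely cite this bound from the Stein's method literature (for instance, Barbour--Holst--Janson) rather than reprove it from scratch.
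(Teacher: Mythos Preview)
Your argument is correct and is precisely the standard derivation: Poisson Stein operator, the Barbour--Eagleson first-difference bound $\|\Delta f_A\|_\infty \leq \min\{1,\lambda^{-1}\}$ on the solution to the Stein equation, the size-bias identity $\E[Wf(W)] = \lambda\,\E[f(W^s)]$, and a telescope. Note, however, that the paper does not supply its own proof of this theorem---it is quoted directly from Ross's survey \cite{Ros11} and used as a black box---so there is nothing to compare against; your write-up simply reproduces the proof that appears in the cited reference.
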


Next we follow steps (1)-(3) from Section \ref{sizebiascouplingSteinmethod} to construct the size-bias distribution $S_n^s$ with respect to $S_n$. Fix a chord diagram $\pi$. 
Pick an index $I \in [2n]$ uniformly at random, independent of $\pi$. If $X_I = 1$, set $\pi^s = \pi$. 
Otherwise, let $\pi^s$ be the chord diagram such that $\pi^s(I) = I+1$, $\pi^s(\pi(I)) = \pi(I+1)$, and $\pi^s(k) = \pi(k)$ for $k \notin \{I,I+1,\pi(I),\pi(I+1)\}$. 
That is, $\pi^s$ is the chord diagram that results by creating the simple chord $(I,I+1)$, the chord $(\pi(I),\pi(I+1))$, and fixing all other chords. 
Finally, let $S_n^s = \sum_{k \neq I} X_k^{(I)} + 1$, where $X_k^{(I)}$ is the indicator that $\pi^s$ has the simple chord $(k,k+1)$. 
It is clear that $(X_k^{(i)})_{k \neq i}$ has the distribution of $(X_k)_{k \neq i}$ conditional on $X_i = 1$, so by Proposition \ref{sizebiascouplingconstruction} we have that $(S_n,S_n^s)$ is a size-bias coupling. We record this in the following proposition. 

\begin{proposition}\label{simplechordscoupling}
Let $S_n$ be the number of simple chords in a random chord diagram of size $n$. Let $S_n^s$ be constructed as above. Then $S_n^s$ has the size-bias distribution of $S_n$. 
\end{proposition}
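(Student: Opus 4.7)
The plan is to invoke Proposition \ref{sizebiascouplingconstruction}. Writing $S_n = \sum_{k=1}^{2n} X_k$ as a sum of zero-one indicators with $p_k := P(X_k = 1) = 1/(2n-1)$ (by Lemma \ref{edgeprobability}) and $E S_n = 2n/(2n-1)$, the uniform choice of $I$ on $[2n]$ satisfies $P(I = i) = p_i / E S_n$ as required by the proposition. Thus the only substantive thing to verify is that the family $(X_k^{(I)})_{k \neq I}$ produced by the construction has the distribution of $(X_k)_{k \neq I}$ conditional on $X_I = 1$, which is equivalent to showing that, conditional on $I = i$, the modified chord diagram $\pi^s$ is uniformly distributed on the set $\mathcal{C}_i := \{\pi : \pi(i) = i+1\}$.

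To verify this, I would split on whether $X_i(\pi) = 1$ or $X_i(\pi) = 0$. In the first case, $\pi^s = \pi$, and the law of $\pi$ conditioned on $\{X_i = 1\}$ is already uniform on $\mathcal{C}_i$. In the second case, the construction applies the swap map $\phi_i$ that replaces the chords $(i, \pi(i))$ and $(i+1, \pi(i+1))$ with $(i, i+1)$ and $(\pi(i), \pi(i+1))$. The crucial combinatorial check is that $\phi_i : \mathcal{C}_i^c \to \mathcal{C}_i$ is a $(2n-2)$-to-$1$ surjection: given a target $\pi' \in \mathcal{C}_i$, every preimage is obtained by picking one of the $n-1$ chords $(x,y)$ of $\pi'$ other than $(i, i+1)$ together with one of the two orientations (pair $i$ with $x$ and $i+1$ with $y$, or $i$ with $y$ and $i+1$ with $x$). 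Each such choice yields a valid $\pi$ with $\pi(i) \neq i+1$ (because $\{x,y\} \cap \{i, i+1\} = \emptyset$), and distinct choices give distinct preimages.

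Combining the two cases, using $|\mathcal{C}_i| = (2n-3)!!$ and $|\mathcal{C}_i^c| = (2n-2)(2n-3)!!$, gives, for each $\pi' \in \mathcal{C}_i$,
\[
P(\pi^s = \pi' \mid I = i) \;=\; \frac{1}{(2n-1)!!} + \frac{2n-2}{(2n-1)!!} \;=\; \frac{1}{(2n-3)!!},
\]
which is precisely the uniform law on $\mathcal{C}_i$. The hypothesis of Proposition \ref{sizebiascouplingconstruction} is then satisfied, so the proposition immediately yields that $S_n^s$ has the size-bias distribution of $S_n$. The only step that needs any real care, rather than being automatic, is the $(2n-2)$-to-$1$ count for $\phi_i$, i.e., confirming that both unswap orientations produce valid matchings in $\mathcal{C}_i^c$ and that no two distinct (chord, orientation) choices collapse to the same preimage.
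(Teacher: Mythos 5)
Your proof is correct and follows the same route as the paper: both reduce the claim to Proposition \ref{sizebiascouplingconstruction} and to checking that, given $I=i$, the modified diagram $\pi^s$ is uniform on $\{\pi : \pi(i)=i+1\}$. The paper simply asserts this last step as clear, whereas you supply the justification via the $(2n-2)$-to-$1$ count for the swap map, which is accurate (and a worthwhile addition, since it is the only nontrivial point).
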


With the size-bias coupling established, we now prove Theorem \ref{simplechordsStein}. 

\begin{proof}[Proof of Theorem \ref{simplechordsStein}]
Let $S_n = \sum_{k=1}^{2n} X_k$ be the number of chords in a random chord diagram $C_n$ and let $Y_n$ be a $\Poi(\lambda_n)$ random variable, where $\lambda_n := ES_n = \frac{2n}{2n-1}$. 
By Proposition \ref{simplechordscoupling} and Theorem \ref{RossThm4.13}, 
\bal
d_{TV}(S_n,Y_n) &\leq \min\{1,\lambda_n\} E|S_n + 1 - S_n^s| = E\left| X_I + \sum_{k \neq I} X_k - X_k^{(I)}\right| \\
&\leq EX_I + \sum_{k \neq I} E\left| X_k - X_k^{(I)} \right|.
\eal

The first term on the right hand side is
\bal
EX_I = \sum_{i=1}^{2n} E(X_i) P(I = i) = \frac{1}{2n-1} \sum_{i=1}^{2n} \frac{1}{2n-1} = \frac{2n}{(2n-1)^2}.
\eal

For the second term, we consider two cases. If $k = i \pm 1$, then $X_k^{(i)} = 0$, so that $X_k - X_k^{(i)}$ is a zero-one random variable that equals one if and only if $X_k = 1$. This gives $E(X_i - X_i^{(i)}) = P(X_k = 1) = \frac{1}{2n-1}$. 

Otherwise, suppose $k \in [2n]\setminus \{i,i \pm 1\}$. Observe that $X_k^{(i)} \geq X_k$ by the size-bias coupling construction.
It follows that $X_k^{(i)} - X_k$ is a zero-one random variable that equals one if and only if $(i,k),(i+1,k+1) \in C_n$ or $(i,k+1),(i+1,k) \in C_n$. This event occurs with probability $\frac{2}{(2n-1)(2n-3)}$. 
Thus
\bal
\sum_{k \neq I} E\left| X_k - X_k^{(I)} \right| &= \sum_{i=1}^{2n} \frac{1}{2n-1} \sum_{k \neq i} E\left| X_k - X_k^{(i)} \right| \\
&= \sum_{i=1}^{2n} \frac{1}{2n-1} \left(\sum_{k = i \pm 1} E(X_k - X_k^{(i)}) + \sum_{k \notin \{i, i \pm 1\}} E(X_k^{(i)} - X_k)\right) \\
&= \sum_{i=1}^{2n} \frac{1}{2n-1} \left( \frac{2}{2n-1} + \frac{2(2n-3)}{(2n-1)(2n-3)} \right) \\
&= \frac{8n}{(2n-1)^2}.
\eal

Combining the above gives
\bal
d_{TV}(S_n,Y_n) &\leq \frac{2n}{(2n-1)^2} + \frac{8n}{(2n-1)^2} = \frac{10n}{(2n-1)^2} \leq \frac{10}{n}.
\eal

Finally, since $\lambda_n \to 1$ as $n \to \infty$, it follows that $d_{TV}(Y_n,Y) \to 0$. Therefore
\bal
d_{TV}(S_n,Y) &= O\left(\frac{1}{n}\right). \qedhere
\eal
\end{proof}

Theorem \ref{simplechordsStein} shows that $S_n$ converges in distribution to a $\Poi(1)$ random variable. 


\subsection{Chord Diagrams with No Simple Chords}

As an application of Theorem \ref{simplechordsStein}, we obtain absolute error bounds on the number of chord diagrams of size $n$ that contain no simple chords, for all $n \geq 1$. 

\begin{proof}[Proof of Theorem \ref{simplechordfreechorddiagrams}]
Let $S_n$ be the number of simple chords in a random chord diagram of size $n$ and let $Y_n$ be a $\Poi(\lambda_n)$ random variable, where $\lambda_n = \frac{2n}{2n-1} = 1 + \frac{1}{2n-1}$. 
By definition of total variation distance, 
\bal
d_{TV}(S_n,Y_n) \geq |P(S_n = 0) - P(Y_n = 0)| = \left| \frac{s(n)}{(2n-1)!!} - e^{-\lambda_n} \right|, 
\eal
for all $n \geq 1$.
Applying Theorem \ref{simplechordsStein} gives
\bal
\left| \frac{s(n)}{(2n-1)!!} - e^{-\lambda_n} \right| \leq \frac{10}{n},
\eal
and rearranging yields
\bal
\frac{(2n-1)!!}{e}\left(e^{-\frac{1}{2n-1}} - \frac{10}{n} \right) \leq s(n) &\leq \frac{(2n-1)!!}{e}\left(e^{-\frac{1}{2n-1}} + \frac{10}{n} \right). \qedhere
\eal
\end{proof}

From Proposition \ref{simplechordfreechorddiagrams}, we get that
\bal
s(n) = \frac{(2n-1)!!}{e}(1 + o(1)),
\eal
which implies that $s(n)$ is asymptotically $\frac{(2n-1)!!}{e}$. Therefore the probability that a random chord diagram of size $n$ contains no simple chords is asymptotically $\frac{1}{e}$. 


\section{Final Remarks}\label{FinalRemarks}

\subsection{} A chord has {\em length $j$} if it is of the form $(i,i+j+1)$, for $i \in [2n]$, where addition is understood to be modulo $2n$. Let $L_j$ be the number of length $j$ chords in a random chord diagram of size $n$. Note that simple chords are length $0$ chords. Acan \cite{Aca17} proved that for $0 \leq j \leq n-2$, $L_j$ converges in distribution to a $\Poi(1)$, but did not provide a convergence rate. Using essentially the same argument as in the proof of Theorem \ref{simplechordsStein}, one can obtain a $O(n^{-1})$ upper bound on the total variation distance between the distribution of $L_j$ and a $\Poi(1)$ random variable. 

\subsection{} Kim \cite{Kim19} showed that the number of descents in a random matching is asymptotically normal. It should be tractable to use size-bias coupling and Stein's method to find the rate of convergence. 

\subsection{} Feray \cite{Fer18} introduced the theory of weighted dependency graphs and gave a normality criterion in this context. 
As an application, he gave another proof of the asymptotic normality of the number of crossings in a random chord diagram. 
It would be interesting to see if there is a Stein's method version for weighted dependency graphs, analogous to the one for regular dependency graphs in \cite{BR89}. 
In another direction, Feray \cite{Fer18} states that weighted dependency graphs can be used to prove a central limit theorem for the number of $k$-crossings. It may be possible to use Stein's method to obtain a rate of convergence. 

\subsection{} Chern, Diaconis, Kane, and Rhoades \cite{CDKR15} showed that the number of crossings in a uniformly chosen set partition of $[n]$ is asymptotically normal. 
Note that a chord diagram is a special case corresponding to a partition of $[2n]$ into $n$ blocks, each of size $2$. 
More recently, Feray \cite{Fer20} generalized this result and used weighted dependency graphs to prove central limit theorems for the number of occurrences of any fixed pattern in multiset permutations and set partitions. 
Is it possible to use size-bias coupling and Stein's method to obtain rates of convergence?

\subsection{} Fix $n \geq 1$, which will denote the number of vertices in a random graph. Let $\bm{d} = (d_1,\ldots,d_n)$, with $d_i \geq 1$ for all $i \in [n]$, be a degree sequence. 
That is, vertex $i$ has degree $d_i$. Consider $2m$ half-edges (so that two half-edges can be connected to form a single edge), where $2m = \sum_{i=1}^n d_i$, and perform a random matching of these half-edges. 
The resulting random graph $\CM_n(\bm{d})$ is called the {\em configuration model with degree sequence $\bm{d}$} (see \cite{Hof17}, Ch. 7). 
Note that random chord diagrams correspond to the special case where the number of vertices is $2n$ and the degree sequence consists of all $1$'s. What is the asymptotic distribution of the number of crossings in the configuration model?


\section*{Acknowledgements}

We thank Jason Fulman for suggesting the problem, introducing us to Stein's method, and providing many useful suggestions and references. 
We also thank Carina Betken for a helpful conversation about size-bias couplings and introducing us to the configuration model. 
Finally we thank Mehdi Talbi, Gin Park, Apoorva Shah, Greg Terlov, and Octavio Arizmendi for stimulating discussions. 


\Address

\end{document}